\documentclass[11pt,a4paper]{amsart}
\usepackage{subfiles}
\usepackage{lipsum}
\usepackage{hyperref}

\hypersetup{urlcolor=blue, citecolor=blue, linkcolor=blue, colorlinks=true}

\usepackage{enumitem}
\usepackage{amsmath}
\usepackage{amssymb}
\usepackage{mathtools}
\usepackage{amsfonts}
\usepackage{microtype}
\usepackage{multicol}
\usepackage{subcaption}
\usepackage{xcolor}
\usepackage{stackengine,scalerel}
\usepackage{tikz}
\usepackage{csquotes}
\usepackage{tikz-cd}
\usepackage{amsthm}
\usepackage{tcolorbox}
\usepackage[capitalise]{cleveref}
\urlstyle{rm}
\usepackage[utf8]{inputenc}
\usepackage[english]{babel}

\DeclareMathAlphabet{\mathpzc}{OT1}{pzc}{m}{it}

\usepackage{newunicodechar}
\usepackage[style=alphabetic,backend=bibtex,maxnames=5,maxalphanames=5,language=english,doi=false,isbn=false,url=true]{biblatex}
\bibliography{literature.bib}
  
\renewbibmacro{in:}{}
\renewbibmacro*{issue+date}{%
  \ifboolexpr{not test {\iffieldundef{year}} or not test {\iffieldundef{issue}}}
    {\printtext[parens]{%
       \iffieldundef{issue}
         {\usebibmacro{date}}
         {\printfield{issue}%
          \setunit*{\addspace}%
          \usebibmacro{date}}}}
    {}%
  \newunit}

\let\oldtocsection=\tocsection
\let\oldtocsubsection=\tocsubsection
\let\oldtocsubsubsection=\tocsubsubsection
\renewcommand{\tocsection}[2]{\hspace{0em}\oldtocsection{#1}{#2}}
\renewcommand{\tocsubsection}[2]{\hspace{1em}\oldtocsubsection{#1}{#2}}
\renewcommand{\tocsubsubsection}[2]{\hspace{2em}\oldtocsubsubsection{#1}{#2}}

\newtheorem{bigthm}{Theorem}

\newtheorem{thm}{Theorem}[section]
\newtheorem{lem}[thm]{Lemma}
\newtheorem{prop}[thm]{Proposition}
\newtheorem{cor}[thm]{Corollary}
\newtheorem{question}[thm]{Question}

\theoremstyle{definition}
\newtheorem{dfn}[thm]{Definition}

\theoremstyle{remark}

\newtheorem{rem}[thm]{Remark}

\setenumerate[1]{leftmargin=*,labelindent=0pt,label=(\roman*),}

\newcommand{\pref}[2]{\hyperref[#2]{#1 \ref*{#2}}}
\urlstyle{tt}

\usepackage{todonotes}

\newcommand{\Spin}{\ensuremath{\operatorname{Spin}}}

\newcommand{\diffu}{\ensuremath{\operatorname{Diff}}}
\newcommand{\diff}{\ensuremath{\operatorname{Diff}}}
\newcommand{\blockdiff}{\ensuremath{\widetilde{\operatorname{Diff}}}}
\newcommand{\diffs}{\ensuremath{\operatorname{Diff}^{\scaleobj{0.8}{\Spin}}}}
\newcommand{\maps}{\ensuremath{\operatorname{maps}}}

\newcommand{\sign}{\ensuremath{\operatorname{sign}}}
\newcommand{\sing}{\ensuremath{\operatorname{Sing}}}
\newcommand{\colim}{\ensuremath{\operatorname{colim}}}
\newcommand{\too}{\longrightarrow}
\newcommand{\congarrow}{\overset{\cong}\longrightarrow}
\newcommand{\embeds}{\hookrightarrow}
\newcommand{\actson}{\curvearrowright}
\DeclarePairedDelimiter{\scpr}{\langle}{\rangle}

\DeclarePairedDelimiter\floor{\lfloor}{\rfloor}

\newcommand{\haut}{\ensuremath{\operatorname{hAut}}}
\newcommand{\blockhaut}{\ensuremath{\widetilde{\operatorname{hAut}}}}

\newcommand{\calR}{\mathcal{R}}
\newcommand{\calA}{\mathcal{A}}

\newcommand{\calN}{\mathcal{N}}
\newcommand{\calS}{\mathcal{S}}
\newcommand{\calL}{\mathcal{L}}

\newcommand{\bbC}{\mathbb{C}}
\newcommand{\bbZ}{\mathbb{Z}}
\newcommand{\bbQ}{\mathbb{Q}}
\newcommand{\bbR}{\mathbb{R}}

\newcommand{\hp}[1]{\mathbb{HP}^{#1}}

\newcommand{\ch}{{\mathrm{ch}}}
\newcommand{\ph}{{\mathrm{ph}}}
\newcommand{\ko}{{\mathrm{KO}}}
\newcommand{\pt}{{\mathrm{pt}}}

\newcommand{\psec}{{\mathrm{Sec}>0}}

\newcommand{\prc}{{\mathrm{Ric}>0}}

\newcommand{\psc}{{\mathrm{scal}>0}}

\addtolength{\textwidth}{+9pt}

\begin{document}
\author{Georg Frenck}
\email{\href{mailto:georg.frenck@kit.edu}{georg.frenck@kit.edu}}
\email{\href{mailto:math@frenck.net}{math@frenck.net}}
\address{Institut f\"ur Algebra und Geometrie, Englerstr.~2, 76131 Karlsruhe, Germany}

\subjclass[2010]{53C21,55R40, 57R20, 57R22, 58D17, 58D05.}

\thanks{I am supported by the DFG (German Research Foundation) -- 281869850 (RTG 2229).}

\title[Diffeomorphisms and positive curvature]{Diffeomorphisms and positive curvature}

\begin{abstract} 
	We prove the existence of elements of infinite order in the homotopy groups of the spaces $\calR_\prc(M)$ and $\calR_\psec(M)$ of positive Ricci and positive sectional curvature, provided that $M$ is high-dimensional and $\Spin$, admits such a metric and has a non-vanishing rational Pontryagin class. 
\end{abstract}

\maketitle

\section{Introduction}
\noindent For a given closed oriented manifold $M$ let $\calR_\psec(M)$ and $\calR_\prc(M)$ denote the spaces of Riemannian metrics of positive sectional or positive Ricci curvature. In contrast to the space $\calR_\psc(M)$ of positive scalar curvature metrics, rather little is known about the topology of those two spaces; especially if one is interested in rational homotopy groups.

In order to state our main result, let $\diffu(M)$ denote the group of orientation preserving diffeomorphisms of $M$ and let $\diffu(M,D)\subset\diffu(M)$ denote the subgroup of those diffeomorphisms that fix an embedded disk $D\subset M$ point-wise. Furthermore, let $\calR_C(M)\subset\calR_\psc(M)$ be a $\diff(M)$-invariant subset.

\begin{bigthm}\label{thm:main}
	Let $M^d$ be closed, simply connected $\Spin$-manifold that has at least one non-vanishing rational Pontryagin class and let $g\in\calR_C(M)$. Let $k\ge1$ be such that $(d+k)$ is divisible by $4$ and $k\le \min(\frac{d-1}{3},\frac{d-5}{2})$. Then map
	\begin{center}
	\begin{tikzpicture}
		\node (0) at (0,0.7){$\pi_{k-1}(\diff(M,D))\otimes\bbQ$};
		\node (1) at (5,0.7){$\pi_{k-1}(\calR_C(M))\otimes\bbQ$};
		
		\draw[->] (0) to (1);
	\end{tikzpicture}
	\end{center}
	induced by the orbit map $f\mapsto f^*g$ is nontrivial. 
	
	In particular: $\pi_{k-1}(\calR_\prc(M))\otimes\bbQ\not=0\not=\pi_{k-1}(\calR_\psec(M))\otimes\bbQ$, provided these spaces are non-empty.
\end{bigthm}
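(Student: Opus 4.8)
The plan is to detect the orbit map with the Gromov--Lawson--Hitchin index difference, thereby reducing the assertion to the non-vanishing of a single $\hat A$-type characteristic number of an $M$-bundle over a sphere, and then to construct such a bundle from the non-vanishing Pontryagin class of $M$. First, observe that it suffices to treat $\calR_C(M)=\calR_\psc(M)$: the inclusion $\calR_C(M)\subseteq\calR_\psc(M)$ is $\diff(M)$-equivariant, so composing it with the orbit map into $\calR_C(M)$ yields the orbit map into $\calR_\psc(M)$, and therefore the composite
$$\pi_{k-1}(\diff(M,D))\otimes\bbQ\longrightarrow\pi_{k-1}(\calR_C(M))\otimes\bbQ\longrightarrow\pi_{k-1}(\calR_\psc(M))\otimes\bbQ$$
being nontrivial forces its first arrow to be nontrivial. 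Granting the $\calR_\psc$ case, the ``in particular'' is immediate: apply the theorem with $\calR_C(M)$ equal to $\calR_\prc(M)$ or $\calR_\psec(M)$ --- these are $\diff(M)$-invariant subsets of $\calR_\psc(M)$, non-empty by hypothesis --- and note that a nontrivial map out of $\pi_{k-1}(\diff(M,D))\otimes\bbQ$ forces its target to be non-zero.

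Since $M$ is simply connected and $\Spin$, it carries a unique $\Spin$ structure; fix it and fix $g\in\calR_\psc(M)$ as a basepoint, and consider the composite
$$\diff(M,D)\xrightarrow{\ f\mapsto f^{*}g\ }\calR_\psc(M)\xrightarrow{\ \mathrm{inddiff}_{g}\ }\Omega^{\infty+d+1}\ko.$$
On $\pi_{k-1}(-)\otimes\bbQ$ the target is $\ko_{d+k}(\pt)\otimes\bbQ\cong\bbQ$, using $4\mid d+k$, so it is enough to see that this composite is rationally nontrivial on $\pi_{k-1}$. An element $\alpha\in\pi_{k-1}(\diff(M,D))\cong\pi_{k}(B\diff(M,D))$ is the same datum as a smooth oriented fibre bundle $E_{\alpha}\to S^{k}$ with fibre $M$, trivialised over a disc and with fibrewise $\Spin$ structure extending that of $M$. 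By the fibrewise Gromov--Lawson surgery built into the index difference (its additivity and bordism-invariance, as set up earlier), the value of the composite on $\alpha$ equals the family index $\mathrm{ind}(E_{\alpha}/S^{k})\in\widetilde{\ko}^{-d}(S^{k})\cong\ko_{d+k}(\pt)$ of the fibrewise $\Spin$ Dirac operator; and the Atiyah--Singer family index theorem identifies this rationally with the characteristic number $\langle\hat A(T_{v}E_{\alpha}),[E_{\alpha}]\rangle$ of the closed $\Spin$ $(d+k)$-manifold $E_{\alpha}$.

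It remains to exhibit one $\alpha$ --- equivalently, one bundle $E_{\alpha}\to S^{k}$ as above --- with $\langle\hat A(T_{v}E_{\alpha}),[E_{\alpha}]\rangle\neq 0$. This is the crux, and the only place the hypotheses ``$p_{j}(TM)\neq 0$'' and ``$k\le\min(\frac{d-1}{3},\frac{d-5}{2})$'' are used. Since $k\le\frac{d-1}{3}$, the degree $k$ lies in the concordance stable range, so the rational homotopy of $B\diff(M,D)$ in this range is accessible via block diffeomorphisms and the pseudoisotopy spectral sequence (in the spirit of Farrell--Hsiang and Weiss--Williams); the bound $k\le\frac{d-5}{2}$ provides the dimensional room to realise the relevant class geometrically inside a single fibre by a parametrised surgery or Hatcher-type twist along a submanifold detecting $p_{j}(TM)$. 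The upshot is a family of diffeomorphisms of $M$ supported away from $D$ whose $M$-bundle over $S^{k}$ has vertical tangent bundle whose top-degree $\hat A$-contribution is non-zero --- this is exactly where $p_{j}(TM)\neq 0$ is fed in, while $4\mid d+k$ is what places that contribution in $\ko_{d+k}$ rather than in a rationally trivial group. Pushing such an $\alpha$ through the previous step produces an element of $\pi_{k-1}(\diff(M,D))\otimes\bbQ$ with non-zero image in $\ko_{d+k}(\pt)\otimes\bbQ$, hence non-zero image in $\pi_{k-1}(\calR_\psc(M))\otimes\bbQ$, which by the first step proves the theorem.

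I expect the main obstacle to be exactly this last step: constructing the $M$-bundle over $S^{k}$ with non-zero vertical $\hat A$-number while staying inside the precise numerical window, and verifying that the characteristic number is genuinely realised by a homotopy class of $\diff(M,D)$ (not merely present in the cohomology of $B\diff(M,D)$). By contrast, the identification of the orbit-map composite with a family index is delicate but standard, and I would quote it from the earlier sections; the reduction to positive scalar curvature is purely formal.
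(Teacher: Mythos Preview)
Your reduction to $\calR_\psc(M)$ and the detection via the index difference are correct and match the paper's final step (which simply cites Hitchin's classical argument). The genuine gap is the construction of the $M$-bundle $E\to S^k$ with $\hat\calA(E)\neq 0$, which you yourself flag as the crux but then only sketch, and the sketch is off in two respects.

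First, the attribution of the numerical bounds is wrong. Both inequalities $k\le\frac{d-1}{3}$ and $k\le\frac{d-5}{2}$ enter together, and only once: they are precisely the range in which the Burghelea--Lashof comparison gives a rational splitting
\[
\pi_k\!\left(\haut(M)/\diff(M)\right)\otimes\bbQ \twoheadrightarrow \pi_k\!\left(\haut(M)/\blockdiff(M)\right)\otimes\bbQ,
\]
so that a class built on the block side lifts to an honest fibre-homotopy-trivial $M$-bundle. There is no separate ``dimensional room for parametrised surgery in a single fibre'' role for the second bound.

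Second, the construction itself is not a Hatcher-type twist or anything coming out of the pseudoisotopy spectral sequence; those typically produce torsion. The paper works on the block side via surgery theory: using $\pi_k(\haut(M)/\blockdiff(M))\cong\calS_\partial(D^k\times M)$ and the surgery exact sequence, one prescribes a normal invariant $\eta\in\calN_\partial(D^k\times M)\cong[S^k\wedge M_+,G/O]_*$ whose underlying stable bundle $\xi$ has exactly two nonzero rational Pontryagin classes, chosen (via the Pontryagin character and Poincar\'e duality against the first nonzero $p_j(TM)$) so that $TM\oplus(-\xi')$ has exactly the two elementary Pontryagin numbers $p_j\cdot p_{m-j}$ and $p_m$. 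A free parameter in $p_m(\xi)$ is then tuned so that the surgery obstruction $\sigma(\eta)=\sign(W')$ vanishes; this uses that all coefficients of the $\calL$-polynomial are nonzero. The resulting structure-set element gives, after the Burghelea--Lashof lift, a fibre-homotopy-trivial bundle $E\to S^k$ with $\hat\calA(E)\neq 0$ (again because all $\hat\calA$-coefficients are nonzero). Finally, to land in $\pi_{k-1}(\diff(M,D))$ rather than $\pi_{k-1}(\diff(M))$, one needs a section of $E$ with trivial normal bundle; the paper obtains this from fibre-homotopy triviality together with the observation that for a $\Spin$ manifold of positive scalar curvature the minimal $j$ with $p_j(TM)\neq 0$ satisfies $j<d/4$, forcing $s^*p_{k/4}(TE)=0$. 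Your proposal does not address this last point either.
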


\begin{rem}[State of the art concerning $\calR_\psec(M)$]
	To the best of the author's knowledge, the following is all that is known about the homotopy type of $\calR_\psec(M)$:
	\begin{enumerate}
		\item Kreck--Stolz have shown in \cite{kreckstolz}, that there exists manifolds $M$ such that $\calR_{\psec}(M)$ is not connected. Their result remains true fore the quotient $\calR_\psec(M)/\diff(M)$, so those components do not originate from the orbit map.
		\item Crowley--Schick--Steimle have shown in \cite{crowleyschicksteimle} that for every manifold $M$, the image of the orbit map $\pi_{k-1}(\diff(M))\to\pi_{k-1}(\calR_\psec(M))$ contains $\bbZ/2$ as a subgroup provided $d+k\equiv1,2\;(8)$ and $d\ge6$. This extends earlier results of Hitchin \cite{hitchin_spinors} and Crowley--Schick \cite{crowleyschick}.
		\item Krannich--Kupers--Randal-Williams have proven the special case $M=\hp2$, $k=4$ of \pref{Theorem}{thm:main} in \cite{KKRW}. Their proof delivers an excellent blueprint for our generalisations. Like in loc.cit., we will construct an $M$-bundle $E\to S^k$ with a $\Spin$-structure on the vertical tangent bundle and non-vanishing $\hat\calA$-genus. Since \cite{KKRW} is written rather densely, we chose to give a more detailed account of their argument in \pref{Section}{sec:prelim} before we go on to proving \pref{Theorem}{thm:main} in \pref{Section}{sec:main}.
	\end{enumerate}	 
\end{rem}

\begin{rem}\leavevmode
\begin{enumerate}
	\item The bound on $k$ can be improved if $M$ is even-dimensional and $\ell$-connected. In this case \pref{Theorem}{thm:main} holds true for $k\le \min(d-4,2\ell-1)$ (cf. \pref{Remark}{rem:morlet}).
	\item As pointed out in \cite{KKRW} this answers a question of Schick \cite[p. 30]{owl} and provides many examples for manifolds to which \cite[Theorem 2.1]{bew} is applicable. 
\end{enumerate}
\end{rem}

\noindent According to \cite{ziller} there are the only known examples of positively manifolds curved in dimensions $4k+3$ for $k\ge2$ are spheres. Also, all $7$-dimensional examples have finite fourth cohomology (cf. \cite{eschenburg, goette_pk, berger_space}). Therefore, a positive answer to the following question would yield the first example of a manifold that admits infinitely many pairwise non-isotopic metrics of positive sectional curvature.

\begin{question}
	Is there a positively curved manifold of dimension $4k+3$, $k\ge1$ with a non-vanishing rational Pontryagin class?
\end{question}  

\subsection*{Acknowledgements} I would like to thank Jens Reinhold for comments on an earlier draft and Bernhard Hanke and Jost Eschenburg for valuable remarks.

\section{Setting the stage}\label{sec:prelim}

\noindent Let $M$ be a closed oriented manifold of dimension $d$ and let $\diff(M)$ denote the group of orientation preserving diffeomorphisms of $M$. We denote by $B\!\diff(M)$ the classifying space for fibre bundles with fibre $M$ and oriented vertical tangent bundle.


\subsection{Block diffeomorphisms}
In this subsection we give a short overview of block diffeomorphisms and we explain how to compare them to diffeomorphisms. For $p\ge0$ let $\Delta^p$ denote the standard topological $p$-simplex. 

\begin{dfn}
	 A \emph{block diffeomorphism} of $\Delta^p\times M$ is a diffeomorphism of $\Delta^p\times M$ that for each face $\sigma\subset\Delta^p$ restricts to a diffeomorphism of $\sigma\times M$. 
\end{dfn}

\noindent The set of all block diffeomorphisms forms a semisimplicial group denoted by $\blockdiff_{\bullet}(M)$ whose $p$-simplices are the block diffeomorphisms of $\Delta^p\times M$. The space $\blockdiff(M)$ of block bundles is defined as the geometric realisation of $\blockdiff_\bullet(M)$ and the associated classifying space is denoted by $B\blockdiff(M)$.

If we consider the semisimplicial subgroup $\diff_\bullet(M)$ of those block diffeomorphisms that commute with the projection $\Delta^p\times M\to \Delta^p$, we precisely get the $p$-simplices of the singular semisimplicial group $\sing_\bullet\diff(M)$. We have an inclusion $\sing_\bullet\diff(M)\subset\blockdiff_\bullet(M)$ and since the geometric realisation of $\sing_\bullet(X)$ is homotopy equivalent to $X$ for any space $X$ (\cite[pp. 8]{handbook_at}), we get an induced map
\[B\!\diff(M)\too B\blockdiff(M).\]

Next, let $\haut(M)$ denote the group-like\footnote{A topological space $X$ is called \emph{group-like} if $\pi_0(X)$ is a group} topological monoid of (orientation preserving) homotopy equivalences of $M$ with classifying space $B\!\haut(M)$. Again, let $\blockhaut(M)$ be the realisation of the semisimplicial group of block homotopy equivalences defined analogously with $B\!\haut(M)$ and $B\blockhaut(M)$ the corresponding classifying spaces. By \cite[Thm 6.1]{dold_partitions} $\haut(M)$ and $\blockhaut(M)$ are homotopy equivalent. Consider the following maps induced by inclusions:
\[B\blockdiff(M) \to B\blockhaut(M)\simeq B\!\haut(M)\qquad B\!\diff(M)\to B\!\haut(M)\]
and let $\haut(M)/\blockdiff(M)$ and $\haut(M)/\diff(M)$ denote the respective homotopy fibres. Note that $\haut(M)/\diff(M)$ classifies $M$-bundles that are homotopy equivalent to the trivial bundle through a homotopy that commutes with the projection of the bundle, i.e. \emph{fibre homotopy trivial} $M$-bundles. We have the following comparison result which easily follows from \cite[Corollary D]{BurgheleaLashof}. 
\begin{lem}
	If $k\le\min(\frac{d-1}{3},\frac{d-5}{2})$ then the map
	\[\pi_k\left(\frac{\haut(M)}{\diff(M)}\right)\left[\frac12\right] \too \pi_k\left(\frac{\haut(M)}{\blockdiff(M)}\right)\left[\frac12\right]\]
	is (split-)surjective.
\end{lem}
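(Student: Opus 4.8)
The plan is to realise the map as the effect on $\pi_k(-)[\tfrac12]$ of a comparison of homotopy fibres, and then to reduce the assertion to a property of their common fibre that is exactly the content of \cite[Corollary D]{BurgheleaLashof}.

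First I would assemble the relevant fibration sequence. The chain $B\diff(M)\to B\blockdiff(M)\to B\haut(M)$, with both maps induced by inclusions, yields on homotopy fibres over $B\haut(M)$ a fibration sequence
\[
  \mathcal{F}\;\too\;\frac{\haut(M)}{\diff(M)}\;\too\;\frac{\haut(M)}{\blockdiff(M)},
  \qquad
  \mathcal{F}:=\operatorname{hofib}\!\bigl(B\diff(M)\to B\blockdiff(M)\bigr),
\]
and the map in the lemma is precisely the one induced on $\pi_k(-)[\tfrac12]$ by the right-hand arrow. Its long exact sequence (which stays exact after inverting $2$) reduces the lemma to a statement about the fibre inclusion $j\colon\mathcal{F}\to\haut(M)/\diff(M)$: the map is surjective on $\pi_k(-)[\tfrac12]$ exactly when $j$ is injective on $\pi_{k-1}(-)[\tfrac12]$, equivalently when the connecting homomorphism out of $\pi_k(\haut(M)/\blockdiff(M))[\tfrac12]$ vanishes; and it is moreover \emph{split} surjective provided $j$ is in addition split injective on $\pi_k(-)[\tfrac12]$. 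So everything comes down to the $2$-local behaviour of $\mathcal{F}$, and of the map $j$, on $\pi_{k-1}$ and $\pi_k$.

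This is where \cite[Corollary D]{BurgheleaLashof} does the real work, and it is the only step that is not formal: its hypothesis is precisely the range $k\le\min(\tfrac{d-1}{3},\tfrac{d-5}{2})$, and its conclusion is a $2$-local comparison of honest with block diffeomorphisms. The mechanism is smoothing theory: in the concordance-stable range $\mathcal{F}$ is built fibrewise over $M$ out of the local block-versus-honest comparison for a disc, $\blockdiff(D^d,\partial)/\diff(D^d,\partial)$, whose homotopy groups in low degrees are finite (with odd torsion appearing already in the lowest relevant degree, essentially through $\Theta_{d+1}$), and \cite[Corollary D]{BurgheleaLashof} pins down the $2$-local structure of $\mathcal{F}$ in the stated range. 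I would invoke it directly for the injectivity of $\pi_{k-1}(\mathcal{F})[\tfrac12]\to\pi_{k-1}(\haut(M)/\diff(M))[\tfrac12]$ and the splitting of $\pi_k(\mathcal{F})[\tfrac12]$ off $\pi_k(\haut(M)/\diff(M))[\tfrac12]$; that $M$ is simply connected is used here to kill the lowest-degree contributions (Cerf's theorem on pseudoisotopies).

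The main obstacle is thus not depth but faithful translation: \cite[Corollary D]{BurgheleaLashof} is phrased in the language of block smoothing and concordance theory for a single manifold, with its own indexing and range conventions, so one has to match it against the fibration-sequence reformulation above, check that its hypotheses hold, and verify that the $2$-local data it furnishes is compatible with the maps to $B\haut(M)$. I expect no genuine surprises, but this is the bookkeeping that requires care; and I would stress that inverting $2$ is essential, since $\mathcal{F}$ generally carries non-trivial odd torsion in this range (already because the groups $\Theta_{d+\bullet}$ do).
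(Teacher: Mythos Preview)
Your proposal is correct and rests on the same single input as the paper, namely \cite[Corollary D]{BurgheleaLashof}. The paper's application is more direct: rather than routing through the long exact sequence of the fibration $\mathcal{F}\to\haut(M)/\diff(M)\to\haut(M)/\blockdiff(M)$, it simply reads off from Corollary~D that in the stated range there is a space $\calS$ with $\pi_k\bigl(\haut(M)/\diff(M)\bigr)[\tfrac12]\cong\pi_k\bigl(\haut(M)/\blockdiff(M)\bigr)[\tfrac12]\times\pi_k(\calS)[\tfrac12]$, from which the split surjection is immediate.
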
 
\begin{proof}
	 By \cite[Corollary D]{BurgheleaLashof}  there exists a space $\calS$ such that for $k\le\min(\frac{d-1}{3},\frac{d-5}{2})$ we have
	 \begin{align*}
	 	\pi_{k}&\left(\frac{\haut(M)}{\diff(M)}\right)\left[\frac12\right]\cong\pi_{k-1}(\Omega\left(\frac{\haut(M)}{\diff(M)}\right))\left[\frac12\right]\\
			&\cong \pi_{k-1}(\Omega\left(\frac{\haut(M)}{\blockdiff(M)}\right)\times\Omega\calS)\left[\frac12\right]\twoheadrightarrow \pi_{k}\left(\frac{\haut(M)}{\blockdiff(M)}\right)\left[\frac12\right]\qedhere
	\end{align*}
\end{proof}

\noindent Therefore, an element of $\pi_k(\haut(M)/\blockdiff(M))\otimes\bbQ$ yields an $M$-bundle $E\to S^k$ that is fibre homotopy trivial, provided that the dimension of $M$ is high enough. The advantage of working with $\haut(M)/\blockdiff(M)$ instead of $\haut(M)/\diff(M)$ stems from the fact, that the former is accessible through surgery theory as we will review in the succeeding section.

\begin{rem}\label{rem:morlet}
	Another approach to compare $B\!\diff(M)$ and $B\blockdiff(M)$ is by using Morlet's lemma of disjunction as in \cite[Lemma]{KKRW}. Let $M^{2n}$ be even-dimensional and consider the following diagram of (homotopy) fibrations
	\begin{center}
		\begin{tikzpicture}
			\node(0) at (0,0) {$B\blockdiff_\partial(D^{2n})$};
			\node(1) at (0,1.3) {$B\!\diff_\partial(D^{2n})$};
			\node(2) at (4,0) {$B\blockdiff(M)$};
			\node(3) at (4,1.3) {$B\!\diff(M)$};
			\node(4) at (0,2.6) {$\frac{\blockdiff_\partial(D^{2n})}{\diff_\partial(D^{2n})}$};
			\node(5) at (4,2.6) {$\frac{\blockdiff(M)}{\diff(M)}$};
			
			\draw[->] (1) to (0); 			
			\draw[->] (0) to (2);			
			\draw[->] (1) to (3);			
			\draw[->] (3) to (2);		
			\draw[->] (4) to (5);			
			\draw[->] (5) to (3);			
			\draw[->] (4) to (1);
		\end{tikzpicture}
	\end{center}
	If $M$ is $\ell$-connected with $\ell\le 2n-4$, then the induced map on homotopy fibres is $(2\ell-2)$-connected by Morlet's lemma of disjunction (cf. \cite[Corollary 3.2 on page 29]{blr}). Now $\pi_k(B\blockdiff_\partial(D^{2n}))\cong \pi_0(\diff_\partial(D^{2n+k-1})$ is isomorphic to the finite group of exotic spheres in dimension $(2n+k)$ and $B\!\diff_\partial(D^{2n})$ is rationally $2n-5$-connected by \cite[Theorem 4.1]{RW_UpperRange}. Therefore, $\blockdiff_\partial(D^{2n})/\diff_\partial(D^{2n})$ is rationally $(2n-5)$-connected and $\blockdiff(M)/\diff(M)$ is rationally $\min(2n-5,2\ell-2)$-connected. This implies that the map 
	\[\pi_k(B\!\diff(M))\otimes\bbQ\to \pi_k(B\blockdiff(M))\otimes\bbQ\]
	is surjective for $k\le \min(2n-4,2\ell-1)$.
\end{rem}

\subsection{Surgery theory}
Let $X$ be a simply connected manifold with boundary $\partial X$. The $\emph{structure set}$ $\calS(X,\partial X)$ of $(X,\partial X)$ (sometimes written as $\calS_\partial(X)$) is defined to be the set of equivalence classes of tuples $(W,\partial W,f)$ where $W$ is a manifold with boundary $\partial W$ and $f$ is a homotopy equivalence\footnote{Since we assume $X$ to be simply connected, every homotopy equivalence is simple and we do not need to require this in the definition.} that restricts to a diffeomorphism on the boundary. Two such tuples $(W_0,\partial W_0,f_0)$ and $(W_1,\partial W_1,f_1)$ are equivalent, if there exists a diffeomorphism $\alpha\colon W_0\to W_1$ such that $f_0=f_1\circ\alpha$.

\noindent\begin{minipage}{\textwidth}
	It is a consequence of the $h$-cobordism theorem that we have the following isomorphism (\cite[Section 3.2, pp.33]{BerglundMadsen})
	
	\[\pi_k\left(\frac{\haut(M)}{\blockdiff(M)}\right)\cong \calS_\partial(D^k\times M).\]
\end{minipage}
\noindent The main result of surgery theory is that the structure set $\calS_\partial(D^k\times M)$ fits into an exact sequence of sets known as the \emph{surgery exact sequence} (cf. \cite[Theorem 10.21 and Remark 10.22]{surgerybook}):
\begin{equation}\label{eq:ses}
	L_{k+d+1}(\bbZ)\too\calS_\partial(D^k\times M) \too \calN_\partial(D^k\times M)\overset{\sigma}{\too} L_{k+d}(\bbZ)
\end{equation}

\noindent Here, $\calN_\partial(D^k\times M)$ is the set of \emph{normal invariants} which is given by equivalence classes of tuples $(W, f,\hat f, \xi)$, where $W$ is a $d+k$-dimensional manifold with (stable) normal bundle $\nu_W$, $\xi$ is a stable vector bundle over $D^k\times M$ and $f\colon W\to D^k\times M$ is a map of degree $1$ covered by a bundle map $\hat f\colon \nu_W\to \nu_{D^k\times M}\oplus\xi$ such that $(f,\hat f)$ restricts to the identity on the boundary and the equivalence relation is given by cobordism. 

Since we only consider simply connected manifolds, the relevant $L$-groups are $4$-periodic and given by (cf. \cite[Theorem 7.96]{surgerybook}
\[L_n(\bbZ) \cong\begin{cases}
	\bbZ &\quad\text{ if } n\equiv0\;(4)\\
	\bbZ/2 &\quad\text{ if } n\equiv2\;(4)\\
	0 &\quad\text{ otherwise}
\end{cases}\]

\noindent and the map $\sigma$ in the surgery exact sequence (\ref{eq:ses}) is the so-called \emph{surgery obstruction map}, which in degrees $d+k\equiv 0\;(4)$ for simply connected $M$ is given by
\[\sigma(W,f,\hat f,\xi) = \frac18\Bigl(\sign(\underbrace{W\cup (D^k\times M)}_{\eqqcolon W'}) - \sign(S^k\times M)\Bigr) =\frac18\sign(W')\]
where $\sign$ denotes the signature (cf. \cite[Lemma 7.170, Exercise 7.188]{surgerybook}). The signature of $W'$ can be computed via Hirzebruch's signature theorem, which constructs a power series
\begin{align*}
	\calL(x_1,x_2,\dots) ={}&1 + s_1x_1 + \dots + s_ix_i + \dots + s_{i,j}x_i\cdot x_j + \dots\\
		& + s_{i_1,\dots,i_n}x_{i_1}\cdots x_{i_n}+ \dots
\end{align*}
such that $\sign(W')=\scpr{\calL(p_1(TW'),p_2(TW'),\dots),\ [W']}$. Here $p_i(TW')$ are the Pontryagin classes of $W'$. Note that $f, \hat f$, and $\xi$ can be extended  trivially to $W'$. Since the map $f$ is of degree one, evaluating the Pontryagin classes of $W'$ against $[W']$ yields the same result as evaluating the Pontryagin classes of $-\xi\oplus T(S^k\times W)$, where $-\xi$ denotes the (stable) orthogonal complement to $\xi$.

In order to further analyse $\calN_\partial(D^k\times M)$, let us define $G(n)=\{f\colon S^{n-1}\to S^{n-1}\text{ homotopy equivalence}\}$ and $BG\coloneqq \colim_{n\to\infty} BG(n)$. Note, that the index shift stems from the fact that one wants to have an inclusion $O(n)\subset G(n)$ of the orthogonal group. Analogously, let $BO\coloneqq \colim_{n\to\infty}BO(n)$. Note that $BG$ is the classifying space for stable spherical fibrations whereas $BO$ is the classifying space for stable vector bundles. The inclusion $O(n)\embeds G(n)$ induces a map $BO\to BG$ and we denote the homotopy fibre by $G/O$. By \cite[Remark 10.28]{surgerybook} there is an identification 
\[\calN_\partial(D^k\times M) \cong [S^k\wedge M_+, G/O]_*.\]
Here $M_+$ is $M$ with a disjoint base point and $\wedge$ denotes the smash product of pointed spaces given by $(X,x)\wedge(Y,y) \coloneqq (X\times Y)/(X\times\{y\}\cup\{x\}\times Y)$.
The functor $S^k\wedge{(\_)}_+$ is adjoint to the $k$-fold loop space functor $\Omega^k(\_)$ and so we get  $[S^k\wedge M_+, G/O]_* \cong [M,\Omega^k G/O]$. Now $\Omega^{k+1} BG$ is the homotopy fibre of the map $\Omega^kG/O\to \Omega^kBO$. By obstruction theory (cf. \cite[p. 418]{hatcher_at}) the obstructions to the lifting problem
\begin{center}
\begin{tikzpicture}
	\node (0) at (0,0) {$M$};
	\node (1) at (3,0) {$\Omega^k BO$};
	\node (2) at (3,1.2) {$\Omega^k G/O$};
	
	\draw[->] (0) to (1);
	\draw[->] (1) to (2);
	\draw[->, dashed] (0) to (2);	
\end{tikzpicture}
\end{center}
live in the groups $H^{i+1}(M;\pi_i(\Omega^{k+1}BG))\cong H^{i+1}(M;\pi_{k+i+1}(BG))$. The homotopy groups of $\pi_k(BG)$ are isomorphic to the shifted stable homotopy groups of spheres $\pi_{k-1}^{st}$ by \cite[p. 135]{surgerybook}. By Serre's finiteness theorem, these groups are finite for $k\ge2$ and hence all obstruction groups vanish rationally, since we assumed that $k\ge1$. Since $\maps_*(M,\Omega^k BO)$ is an $H$-space, we see that for every (pointed) map $f\colon M\to \Omega^kBO$, some multiple of $f$ can be lifted to $\Omega^k G/O$. Therefore it suffices for us to specify an element in
\[[S^k\wedge M_+, BO]_* = \widetilde{\ko}^0(S^k\wedge M_+)\]
in order to get a normal invariant. Next, consider the isomorphism given by the Pontryagin character:
\begin{align*}
	\ph(\_)\coloneqq \ch(\_\otimes\bbC)\colon \widetilde{\ko}^0(S^k\wedge M_+)\otimes\bbQ\congarrow \bigoplus_{i\ge0} \widetilde H^{4i}&(S^k\wedge M_+;\bbQ)\\
		&\cong u_k\cdot\bigoplus_{i\ge0} H^{4i-k}(M;\bbQ)
\end{align*}
for $u_k$ the cohomological fundamental class in $H^k(S^k)$. The $i$-th component of the Pontryagin character is given by
\begin{align*}
	\ph_i(\xi) &= \ch_{2i}(\xi\otimes\bbC) = \frac{1}{(2i)!}\Bigl(\bigl(-2i)c_{2i}(\xi) + f(c_1(\xi),\dots, c_{2i-1}(\xi)\bigr)\Bigr)\\
	 &= \frac{(-1)^{i+1}}{(2i-1)!} p_i(\xi) 
\end{align*}
where $f(c_1(\xi),\dots, c_{2i-1}(\xi))$ is a polynomial in Chern classes of $\xi$ homogenous of degree $2i$ which vanishes since all products in $\widetilde H^*(S^k\wedge M_+;\bbQ)$ are trivial. Hence, for any collection $(x_i)\in H^{4i-k}(M;\bbQ)$ and $(A_i)\in \bbQ$ there exists a $\lambda\in\bbZ\setminus\{0\}$ and a normal invariant $(W,f,\hat f,\xi)\in\calN_\partial(D^k\times M)$ such that
\[p_i(\xi') = (-1)^{i+1}(2i-1)!\lambda A_i\cdot u_k\cdot x_i,\]

\noindent where $u_k$ denotes the cohomological fundamental class of $S^k$. This allows us to construct a normal invariant such that the underlying stable vector bundle has prescribed Pontryagin classes, which we will do in the succeeding section.

\section{Proof of main theorem}\label{sec:main}
\subsection{Prescribing Pontryagin classes}
\noindent Let $M^d$ be as in \pref{Theorem}{thm:main} and let $m\coloneqq \frac{d+k}4$. Let $j\coloneqq \min\{i\ge1\colon p_i(TM)\not=0\in H^{4i}(M;\bbQ)\}\in\{1,\dots, \floor{\frac d4}\}$, where $p_i(TM)$ denotes the $i$-th Pontryagin class of $M$.
\begin{lem}\label{lem:construction}
	There exists a normal invariant $\eta\in\calN_\partial(D^k\times M)$ with underlying stable vector bundle $\xi\to D^k\times M$ with the following property: For $\xi'$ the extension of $\xi$ by the trivial bundle to $S^k\times M$, we have 
	\[\scpr{p_j(TM\oplus -\xi')\cdot p_{m-j}(TM\oplus -\xi'),\ [S^k\times M]}\not=0\not=\scpr{p_{m}(TM\oplus -\xi'),\ [S^k\times M]} \]
	are the only non-vanishing elementary Pontryagin numbers of $TM\oplus-\xi$, and $\sigma(\eta)=0$.
\end{lem}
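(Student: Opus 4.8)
The plan is to apply the construction recalled at the end of \pref{Section}{sec:prelim}: for any prescribed classes $x_i\in H^{4i-k}(M;\bbQ)$ and rationals $A_i$ there are a $\lambda\in\bbZ\setminus\{0\}$ and a normal invariant $(W,f,\hat f,\xi)\in\calN_\partial(D^k\times M)$ whose extension $\xi'$ over $S^k\times M$ satisfies $p_i(\xi')=(-1)^{i+1}(2i-1)!\lambda A_i\cdot u_k\cdot x_i$. Concretely, I switch on exactly two of the classes $x_i$. Since $4(m-j)-k=d-4j\ge0$ (because $j\le\floor{d/4}$) and $p_j(TM)\ne0\in H^{4j}(M;\bbQ)$ by definition of $j$, Poincar\'e duality over $\bbQ$ supplies a class $x_{m-j}\in H^{d-4j}(M;\bbQ)$ with $\scpr{x_{m-j}\cdot p_j(TM),[M]}\ne0$; and I let $x_m\in H^{4m-k}(M;\bbQ)=H^d(M;\bbQ)\cong\bbQ$ be a generator. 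All other $x_i$ are set to zero, all $A_i$ are zero except $A_{m-j}:=1$, and $A_m$ is a rational to be fixed at the very end.

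Set $c_i:=(-1)^{i+1}(2i-1)!\lambda A_i$. Since $u_k^2=0$, the Whitney formula gives $p(-\xi')=1-\sum_{i\ge1}p_i(\xi')$, whence
\[p_\ell(TM\oplus-\xi')=p_\ell(TM)-u_k\sum_{i=1}^\ell c_i\,x_i\,p_{\ell-i}(TM)=:a_\ell+u_k b_\ell,\qquad a_\ell=p_\ell(TM).\]
For a partition $\mu=(\ell_1,\dots,\ell_r)$ of $m$, expanding $\prod_s(a_{\ell_s}+u_k b_{\ell_s})$ and using $u_k^2=0$ together with $H^{4m}(M;\bbQ)=H^{d+k}(M;\bbQ)=0$ (valid as $k\ge1$; this kills the purely-$a$ term) yields
\[\scpr{p_\mu(TM\oplus-\xi'),[S^k\times M]}=\sum_{t}\Bigl\langle b_{\ell_t}\!\!\prod_{s\ne t}p_{\ell_s}(TM),\,[M]\Bigr\rangle.\]
Because $x_i\ne0$ only for $i\in\{m-j,m\}$, and $p_0(TM)=1$ while $p_i(TM)=0$ for $0<i<j$, the class $b_\ell$ can be nonzero only for $\ell\in\{m-j,m\}$; furthermore, in a summand with $\ell_t=m-j$ the remaining parts sum to $j$ and each must be $\ge j$, so there is exactly one of them, equal to $j$. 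Consequently the only elementary Pontryagin numbers of $TM\oplus-\xi'$ that may fail to vanish are those of $\mu=(m)$ and $\mu=(j,m-j)$, namely
\[N_2:=-c_{m-j}\scpr{x_{m-j}p_j(TM),[M]}-c_m\scpr{x_m,[M]},\qquad N_1:=-\varepsilon\,c_{m-j}\scpr{x_{m-j}p_j(TM),[M]},\]
where $\varepsilon=2$ if $m=2j$ and $\varepsilon=1$ otherwise. As $A_{m-j}=1$ forces $c_{m-j}\ne0$, we already have $N_1\ne0$.

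It remains to arrange $\sigma(\eta)=\tfrac18\sign(W')=0$. By the degree-one argument of \pref{Section}{sec:prelim} and Hirzebruch's signature theorem,
\[\sign(W')=\Bigl\langle\calL_m\bigl(p_1(TM\oplus-\xi'),\dots\bigr),[S^k\times M]\Bigr\rangle=\sum_{\mu\vdash m}s_\mu\scpr{p_\mu(TM\oplus-\xi'),[S^k\times M]}=s_m N_2+s_{j,m-j}N_1,\]
where $s_m$ and $s_{j,m-j}$ are the coefficients of $p_m$, resp.\ $p_jp_{m-j}$, in $\calL_m$; both are non-zero, as all coefficients of the Hirzebruch $\calL$-polynomial in the monomial basis of Pontryagin classes are non-zero. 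I therefore impose $N_2=-\tfrac{s_{j,m-j}}{s_m}N_1$, which keeps $N_2\ne0$ since $N_1\ne0$. Every characteristic number occurring here is proportional to $\lambda$, so this is a single linear equation for $A_m$ in which $\lambda$ cancels, and it thus fixes a definite rational value of $A_m$. Choosing this $A_m$ first and only afterwards invoking the existence of $\lambda$ produces the required $\eta$, for which $\sigma(\eta)=\tfrac18(s_m N_2+s_{j,m-j}N_1)=0$.

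The step I expect to be the main obstacle is precisely the bookkeeping just described: the scalar $\lambda$ is not under our control, so one has to check that "$\sign(W')=0$ while $N_1,N_2\ne0$" can be imposed before $\lambda$ is produced --- which succeeds exactly because all the relevant Pontryagin and signature numbers scale linearly with $\lambda$ and because $s_m,s_{j,m-j}\ne0$. A secondary point that needs care is the combinatorial claim isolating the partitions $(m)$ and $(j,m-j)$, which rests on the minimality of $j$ and on the vanishing of $H^{>d}(M;\bbQ)$.
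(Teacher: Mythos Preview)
Your argument is correct and follows essentially the same route as the paper's proof: choose $x_{m-j}$ via Poincar\'e duality against $p_j(TM)$, switch on only the $(m{-}j)$-th and $m$-th Pontryagin classes of $\xi'$, use minimality of $j$ together with $u_k^2=0$ to isolate the partitions $(m)$ and $(j,m{-}j)$, and then solve a single linear equation in $A_m$ coming from the signature theorem and the non-vanishing of the $\calL$-coefficients. Your write-up is in fact slightly more careful than the paper's in two places: you track the combinatorial factor $\varepsilon$ in the case $m=2j$ (which the paper silently suppresses), and you make explicit that the determining equation for $A_m$ is independent of the uncontrolled integer $\lambda$, so that $A_m$ can legitimately be fixed before $\lambda$ is produced.
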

\begin{proof}
	Let $u_M\in H^{4m-k}(M;\bbQ)$ denote the cohomological fundamental class of $M$. Since the cup product induces a perfect pairing 
	\[H^{4j}(M;\bbQ)\times H^{4(m-j)-k}(M;\bbQ)\to\bbQ,\]
	there exists a class $x\in H^{4(m-j)-k}(M;\bbQ)$ such that $x\cdot p_j(TM)=u_M$. By the discussion in section 2 for every $A\in\bbQ$ there exists a $\lambda\in\bbZ\setminus\{0\}$ and a normal invariant $\eta=(W,f,\hat f,\xi)$ such that the (extended) stable vector bundle $\xi'$ has only $2$ higher non-vanishing rational Pontryagin classes, namely:
	\begin{align*}
		p_0(\xi') &=1 \\
		p_{m-j}(\xi') &= -{(-1)^{m-j+1}(2m-2j)!}\lambda \cdot u_k\cdot x\\
		p_{m}(\xi') &= -{(-1)^{m+1}(2m)!}\lambda A \cdot u_k\cdot u_M
	\end{align*}
	\noindent Since $j<m$ and $p_i(TM)=0$ for all $0<i<j$, we have\footnote{Since we are only interested in rational Pontryagin classes we have $p(V\oplus W)=p(V)\cdot p(W)$.}
	\begin{align*}
		p_{n}(TM\oplus&-\xi') = \sum_{i=0}^n p_i(TM)\cdot p_{n-i}(-\xi')\\
			&=\begin{cases}
				p_m(TM) + p_j(TM)\cdot p_{m-j}(-\xi') + p_m(-\xi') &\text{ if } n=m\\
				p_{m-j}(TM) + p_{m-j}(-\xi')&\text{ if } n=m-j\\
				p_{n}(TM) &\text{ otherwise}
			\end{cases}\\
		\scpr{p_j(TM\oplus &-\xi')\cdot p_{m-j}(TM\oplus -\xi'),\ [S^k\times M]}\\
			&= \scpr{p_j(TM)\cdot (p_{m-j}(TM) + p_{m-j}(-\xi')),\ [S^k\times M]}\\
			&= \scpr{p_j(TM)\cdot {(-1)^{m-j+1}(2m-2j)!}\lambda \cdot x\cdot u_k,\ [S^k\times M]}\\
			&= {(-1)^{m-j+1}(2m-2j)!}\lambda \eqqcolon b \not=0
	\end{align*}
	\begin{align*}
		\scpr{p_m&(TM\oplus -\xi'),\ [S^k\times M]}\\
			&= \scpr{p_m(TM) + p_j(TM)\cdot (p_{m-j}(TM) + p_{m-j}(-\xi')) + p_m(-\xi'),\ [S^k\times M]}\\
			&= {(-1)^{m-j+1}(2m-2j)!}\lambda +  \scpr{{(-1)^{m+1}(2m)!}\lambda A \cdot u_k\cdot u_M,\ [S^k\times M]}\\
			&= \underbrace{(-1)^{m-j+1}\lambda(2m-2j)!}_{=b} + \underbrace{(-1)^{m+1}\lambda  (2m)!}_{\eqqcolon c\not=0}\cdot A = b + c\cdot A\
	\end{align*}
	We will choose $A$ later. Note that every non-vanishing elementary Pontryagin number of $TM\oplus -\xi$ must contain a Pontryagin class of $\xi$. Otherwise it would be a Pontryagin number of $TM$ of total degree $d+k$ which would evaluate trivially against $[S^k\times M]$. Therefore, any non-vanishing elementary Pontryagin-number of $TM\oplus-\xi$ must either contain $p_{m-j}$ or $p_m$ and since $p_i(TM)=0$ for all $0<i<j$, the above are the only possibly non-vanishing ones. It remains to compute the surgery obstruction using Hirzebruch's signature theorem:	\begin{align*}
	\sigma(\eta) &= \sign(W') = \scpr{\calL(W'),\ [W']} = \scpr{\calL(W'),\ f_*[S^k\times M]}\\
		&= \scpr{\calL(S^k)\cdot\calL(TM\oplus\xi),\ [S^k\times M]}\\
		&= \scpr{s_{j,m-j}p_j(TM\oplus\xi) p_{m-j}(TM\oplus\xi) + s_mp_m(TM),\ [S^k\times M]}\\
		&= (s_{j,m-j} + s_m)b  + s_m\cdot c \cdot A. 
	\end{align*}
	Since all coefficients $s_{\dots}$ in the $\calL$-polynomial are nonzero by \cite{BergBerg}, it follows that we can choose $A\not=0$ such that $\sigma(\eta)=0$. 
\end{proof}

\noindent By the discussion in \pref{Section}{sec:prelim}, there exists a bundle $E\to S^k$ with the same two non-vanishing elementary Pontryagin numbers and by \cite[Lemma 2.5]{a-hat-bundles} the $\hat\calA$-genus of $E$ does not vanish. Also note that $E$ is fibre homotopy equivalent to the trivial bundle.

\begin{lem}\label{lem:section}
	If $j\coloneqq \min\{i\ge1\colon p_i(TM)\not=0\}$ is smaller than $d/4$, then there exists a bundle $E\to S^k$ as above that has a cross-section with trivial normal bundle.
\end{lem}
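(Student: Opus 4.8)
The plan is to build the section from the fibre homotopy trivialisation of $E$, to identify the normal bundle of its image with a restriction of the stable bundle $\xi'$ from \pref{Lemma}{lem:construction}, and then to use $j<d/4$ to see that this restriction is rationally trivial, hence — after a harmless adjustment — stably trivial, hence trivial because its rank $d$ exceeds $k=\dim S^k$.

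Before starting I would record that the $\widetilde{\ko}^0(S^k)$-summand of the underlying $\ko$-class of $\eta$ (under $\widetilde{\ko}^0(S^k\wedge M_+)\cong\widetilde{\ko}^0(S^k\wedge M)\oplus\widetilde{\ko}^0(S^k)$, coming from $M_+\simeq M\vee S^0$) has vanishing Pontryagin character. This is exactly the role of the hypothesis $j<d/4$, which forces $d-4j>0$, so that the two non-vanishing Pontryagin classes $p_{m-j}(\xi')=(\mathrm{const})\,u_k\cdot x$ and $p_m(\xi')=(\mathrm{const})\,u_k\cdot u_M$ both carry a factor in $H^{>0}(M)$, i.e.\ the Pontryagin character of $\xi'$ lies in the $\widetilde H^*(S^k\wedge M;\bbQ)$-summand. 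That component is therefore torsion; it vanishes unless $k\equiv 1,2\pmod{8}$, and in that case I would replace $\eta$ by $2\eta$, which makes it vanish while doubling the two Pontryagin numbers (all cross terms vanish as $u_k^2=0$, so they stay nonzero), preserving $\sigma=0$ by additivity of the surgery obstruction, and — by \cite[Lemma 2.5]{a-hat-bundles} — preserving the non-vanishing of the $\hat\calA$-genus of the associated bundle. So we may assume this component is $0$.

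Now take $E\to S^k$ with projection $\pi$ as in the discussion preceding the lemma, together with its fibre homotopy equivalence $\phi\colon E\xrightarrow{\ \simeq\ }S^k\times M$ over $S^k$ and a fibre homotopy inverse $\psi$. Composing $\psi$ with a constant section $x\mapsto(x,m_0)$ of $S^k\times M$ gives a continuous section of $E\to S^k$, which I would approximate by a smooth section $s\colon S^k\to E$ (a nearby smooth section exists and is homotopic to it through sections). Since $\pi\circ s=\id$, the vertical tangent bundle $T^vE$ restricts along $s$ to a complement of $s_*TS^k$ inside $TE|_{s(S^k)}$, whence the normal bundle of $s(S^k)\subset E$ is $s^*T^vE$. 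By the defining property of the surgery normal invariant, $\psi^*[T^vE]=[\mathrm{pr}_M^*TM]-[\xi']$ in $\widetilde{\ko}^0(S^k\times M)$ (up to the usual sign), and since $\phi\circ s$ is fibrewise homotopic to $x\mapsto(x,m_0)$, restriction to $S^k\times\{m_0\}$ gives $[s^*T^vE]=\epsilon^d-[\xi'|_{S^k\times\{m_0\}}]$. But $[\xi'|_{S^k\times\{m_0\}}]$ is exactly the $\widetilde{\ko}^0(S^k)$-component of $\eta$ discussed above, which we arranged to be $0$. Hence $s^*T^vE$ is stably trivial; as it has rank $d>k$ (indeed $3k\le d-1$), it is trivial, and therefore so is the normal bundle of $s(S^k)$.

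The step I expect to be the main obstacle is the identity $\psi^*[T^vE]=[\mathrm{pr}_M^*TM]-[\xi']$: one must trace the construction of $E$ (normal invariant $\to$ structure set $\to$ block bundle $\to$ honest bundle, via the comparison of $\diff$ and $\blockdiff$ from \pref{Section}{sec:prelim}) carefully enough to pin down its vertical tangent bundle in terms of $\xi'$. Everything else — existence and smoothing of the section, the torsion bookkeeping, and the passage from stable to genuine triviality in this range — is routine.
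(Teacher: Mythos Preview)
Your proof is correct and follows essentially the same line as the paper's: construct the section from the fibre homotopy trivialisation, identify its normal bundle with the restriction of (the stable inverse of) $\xi'$ to $S^k\times\{m_0\}$, and use $j<d/4$ to see that this restriction is rationally trivial, hence trivial after passing to a suitable multiple.

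The only noteworthy differences are presentational. You phrase the key computation in $\ko$-theory via the splitting $\widetilde{\ko}^0(S^k\wedge M_+)\cong\widetilde{\ko}^0(S^k\wedge M)\oplus\widetilde{\ko}^0(S^k)$, observing that the hypothesis forces the $\widetilde{\ko}^0(S^k)$-summand of $\xi'$ to be torsion; the paper instead computes directly that $s^*p_{k/4}(TE)=\mathrm{triv}^*p_{k/4}(-\xi)=0$ because $k/4<m-j$, using that $\pi_k(BO)\otimes\bbQ$ is detected by $p_{k/4}$. You are more explicit about killing the residual $\bbZ/2$-torsion in $\pi_k(BO)$ for $k\equiv1,2\ (8)$ by doubling $\eta$, whereas the paper disposes of this with the sentence \enquote{Since we are only interested in the problem rationally, it suffices to consider the case $k\equiv0\;(4)$}. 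Finally, the step you flag as the main obstacle --- tracing $T^vE$ back to $\mathrm{pr}_M^*TM-\xi'$ through the passage normal invariant $\to$ structure set $\to$ block bundle $\to$ bundle --- is exactly the identity the paper uses without further comment in the line $s^*p_n(TE)=\mathrm{triv}^*\bigl(\sum_i p_i(TM)\,p_{n-i}(-\xi)\bigr)$; so both arguments rest on the same assertion at that point.
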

\begin{proof}
	Let $\mathrm{triv}\colon S^k\embeds S^k\times M$ be the trivial section. Since the bundle $E$ constructed in \pref{Lemma}{lem:construction} is fibre homotopy equivalent to the trivial bundle via $f\colon S^k\times M\simeq E$ we get a section $s\coloneqq f\circ \mathrm{triv}\colon S^k\to E$. We have
	\begin{align*}
		s^*p_n(TE) & = \mathrm{triv}^*\left(\sum_{i=0}^n p_i(TM)\cdot p_{n-i}(-\xi)\right)\\
			& = \sum_{i=0}^n \underbrace{\mathrm{triv}^*p_i(TM)}_{=0 \text{ for } i\ge1}\cdot \mathrm{triv}^*p_{n-i}(-\xi) = \mathrm{triv}^* p_n(-\xi)
	\end{align*}
	Recall, that the only non-vanishing Pontryagin classes of $\xi$ are $p_{m-j}$ and $p_m$ and let $\nu_s$ denote the normal bundle of $s$. Since the rank of this bundle is bigger than $k$, the bundle $\nu_s$ is stable in the sense that it is classified by an element in
	\[
	\pi_k(BO) = \ko^{-k}(\pt)\cong \begin{cases}
		\bbZ & \text{ for } k\equiv 0\;(4)\\
		\bbZ/2 & \text{ for } k\equiv 1,2\;(8)\\
		0 & \text{ otherwise}
	\end{cases}.
	\]
	Since we are only interested in the problem rationally, it suffices to consider the case $k\equiv0\;(4)$. It follows, that $\nu_s$ is trivial if $p_{k/4}(\nu_s)=0$ and as $p(S^k)=1$, the Pontryagin class $p_{k/4}$ of $\nu_s$ satisfies
	\begin{align*}
		p_{k/4}(\nu_s) = p_{k/4}(s^*TE) = s^*p_{k/4}(TE) = \mathrm{triv}^*p_{k/4}(\xi)=0
	\end{align*}
	since by our assumption $k/4 < \frac{d+k}{4}-j = m-j$ and $p_{m-j}$ and $p_m$ are the only Pontryagin classes of $\xi$.
\end{proof}

\begin{rem}\label{rem:section}
	If $d\not\equiv0\;(4)$, the requirement from the lemma is automatically full-filled. If $d\equiv0\;(4)$ and $j=d/4$, then $M$ has only one non-vanishing Pontryagin number, namely $\scpr{p_{d/4}(TM),\ [M]}$. Since all coefficients in the $\hat\calA$-polynomial are nonzero by \cite{BergBerg}, we have $\hat\calA(M) = a\cdot\scpr{p_{d/4}(TM),\ [M]}\not=0$ for some $a\in\bbZ\setminus\{0\}$. If additionally $M$ admits a $\Spin$-structure, then by the Lichnerowicz-formula and the Atiyah--Singer index theorem \cite{atiyahsinger, lichnerowicz}, $M$ does not support a metric of positive scalar curvature. Hence, for a $\Spin$-manifold of positive scalar curvature, we have $j<d/4$ and \pref{Lemma}{lem:section} applies.
\end{rem}

\noindent From the discussion in the preceding section we get:

\begin{prop}\label{prop:bundle}
	Let $k\ge1$ and let $M$ be an oriented, simply connected manifold of dimension $d\ge\max(3k+1,2k+5)$ that has at least one non-vanishing Pontryagin class. If $d+k\equiv0\;(4)$, then there exists a smooth, oriented $M$-bundle $E\to S^k$ that is fibre homotopy equivalent to the trivial bundle and satisfies $\hat\calA(E)\not=0$. If $M$ admits a $\Spin$-structure and a metric of positive scalar curvature, then the bundle admits a cross-section with trivial normal bundle.
\end{prop}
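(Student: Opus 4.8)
The plan is to assemble \pref{Lemma}{lem:construction}, \pref{Lemma}{lem:section} and the comparison results of \pref{Section}{sec:prelim}. First I would observe that the dimension bound $d\ge\max(3k+1,2k+5)$ is precisely the inequality $k\le\min(\frac{d-1}{3},\frac{d-5}{2})$, so that both the comparison lemma relating $\haut(M)/\diff(M)$ and $\haut(M)/\blockdiff(M)$ and the identification $\pi_k(\haut(M)/\blockdiff(M))\cong\calS_\partial(D^k\times M)$ are available. Since $k\ge1$ and $j\le d/4$, one has $m=\frac{d+k}{4}>\frac d4\ge j$, so the hypothesis $j<m$ used in \pref{Lemma}{lem:construction} holds automatically; that lemma then supplies a normal invariant $\eta\in\calN_\partial(D^k\times M)$ with $\sigma(\eta)=0$ whose underlying stable bundle $\xi'$ realises exactly the two prescribed non-vanishing elementary Pontryagin numbers of $TM\oplus-\xi'$ and no others.

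Next I would feed $\eta$ into the surgery exact sequence (\ref{eq:ses}): because $\sigma(\eta)=0$, exactness produces a lift $\bar\eta\in\calS_\partial(D^k\times M)\cong\pi_k(\haut(M)/\blockdiff(M))$. By the comparison lemma some nonzero integer multiple of $\bar\eta$ is pulled back from $\pi_k(\haut(M)/\diff(M))$, and since this monoid quotient classifies fibre homotopy trivial $M$-bundles, the resulting class is realised by a genuine smooth oriented $M$-bundle $E\to S^k$ that is fibre homotopy equivalent to the trivial bundle. This is the bundle asserted by the proposition.

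It then remains to read off the characteristic numbers of the total space. Tracing the Pontryagin data through the identifications above, the closed $(d+k)$-dimensional manifold $E$ (note $d+k\equiv0\;(4)$) has rational Pontryagin numbers equal, up to the chosen nonzero multiple, to those of the manifold $W'$ from the proof of \pref{Lemma}{lem:construction}, i.e.\ to the numbers computed there from $TM\oplus-\xi'$. Consequently the only non-vanishing elementary Pontryagin numbers of $E$ are nonzero multiples of $\scpr{p_j(TE)\cdot p_{m-j}(TE),[E]}$ and $\scpr{p_m(TE),[E]}$, and \cite[Lemma 2.5]{a-hat-bundles} then gives $\hat\calA(E)\ne0$. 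Finally, if $M$ is in addition $\Spin$ and carries a metric of positive scalar curvature, then \pref{Remark}{rem:section} forces $j<d/4$, so \pref{Lemma}{lem:section} applies to $E$ and yields a cross-section with trivial normal bundle.

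The step I expect to be the main obstacle is the third one: verifying that the characteristic numbers prescribed at the level of the normal invariant $\eta$ genuinely persist through the surgery exact sequence, the isomorphism with $\pi_k(\haut(M)/\blockdiff(M))$, and the comparison with honest diffeomorphism groups — the latter only being valid after inverting $2$, which is exactly why one is forced to pass to a nonzero multiple of $\bar\eta$ (and hence to a bundle whose Pontryagin numbers are rescaled but still non-vanishing). Once $E$ is known to carry precisely the two Pontryagin numbers of \pref{Lemma}{lem:construction}, the non-vanishing of $\hat\calA(E)$ and the existence of the framed section are formal consequences of the results already cited.
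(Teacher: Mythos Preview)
Your proposal is correct and follows exactly the route the paper takes: the proposition is deduced from \pref{Lemma}{lem:construction}, the surgery exact sequence, the comparison lemma of \pref{Section}{sec:prelim}, the citation of \cite[Lemma~2.5]{a-hat-bundles}, and then \pref{Remark}{rem:section} together with \pref{Lemma}{lem:section} for the cross-section statement. In fact you spell out more than the paper does---the paper simply asserts, after \pref{Lemma}{lem:construction}, that ``by the discussion in \pref{Section}{sec:prelim}'' the bundle $E\to S^k$ with the prescribed Pontryagin numbers exists, so your caution about tracing the characteristic numbers through the identifications (and the passage to a nonzero multiple forced by the comparison lemma) is well placed but not elaborated in the original.
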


\begin{rem}\label{rem:a-hat}
	\begin{enumerate}
		\item This recovers \cite[Theorem 1.4]{HankeSchickSteimle} and provides an upgrade: the result in loc.cit. is \enquote{based on abstract existence results [and] does not yield an explicit description of the diffeomorphism type of the fibre manifold} \cite[p. 337]{HankeSchickSteimle}. In contrast, our result states, that it is correct for generic manifolds.
		\item By \cite[Proposition 1.9]{HankeSchickSteimle} and \cite[Lemma 2.3]{Wiemeler} a bundle $M\to E\to S^k$ is rationally nullcobordant, if all rational Pontryagin classes vanish or if $\dim(M)<\frac k2$. This shows that both assumptions on $M$ from \pref{Proposition}{prop:bundle} (and hence from \pref{Theorem}{thm:main}) are actually necessary, even though the dimension bound is not be optimal (cf. \pref{Remark}{rem:morlet}). 
	\end{enumerate}
\end{rem}

\noindent Recall that an oriented manifold $M$ is called \emph{$\hat\calA$-multiplicative fibre in degree $k$} if for every oriented $M$-bundle $E\to S^K$ we have $\hat\calA(E)=0$ (cf. \cite[Definition 1.8]{HankeSchickSteimle}). From \pref{Proposition}{prop:bundle} and \pref{Remark}{rem:a-hat} (ii) we deduce the following corollary.

\begin{cor}
	A manifold $M$ of dimension $d\ge\max(3k+1,2k+5)$ is an $\hat\calA$-multiplicative fibre in degree $k$ if and only if all its rational Pontryagin classes vanish.
\end{cor}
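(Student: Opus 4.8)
The plan is to obtain both implications directly from material already in hand, viewing the corollary as a repackaging of \pref{Proposition}{prop:bundle} together with \pref{Remark}{rem:a-hat} (ii).

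For the \enquote{if} direction, suppose all rational Pontryagin classes of $M$ vanish. Then \pref{Remark}{rem:a-hat} (ii), i.e.\ \cite[Proposition 1.9]{HankeSchickSteimle}, shows that every smooth oriented $M$-bundle $E\to S^k$ is rationally nullcobordant, so all Pontryagin numbers of the closed manifold $E$ vanish. Since the $\hat\calA$-genus is a fixed $\bbQ$-linear combination of Pontryagin numbers, it is an invariant of rational oriented cobordism, so $\hat\calA(E)=0$ for every such $E$; that is, $M$ is an $\hat\calA$-multiplicative fibre in degree $k$. (When $d+k\not\equiv 0\ (4)$ this holds automatically because $\hat\calA(E)$ vanishes for dimensional reasons, so the content of the statement is the case $d+k\equiv 0\ (4)$.) Note that no dimension hypothesis on $M$ is needed for this direction.

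For the \enquote{only if} direction I argue by contraposition. Assume that some rational Pontryagin class of $M$ is non-zero; by the remark above we may assume $d+k\equiv 0\ (4)$, as otherwise there is nothing to prove. The hypothesis $d\ge\max(3k+1,2k+5)$ is precisely what is required to invoke \pref{Proposition}{prop:bundle}, which then produces a smooth oriented $M$-bundle $E\to S^k$ with $\hat\calA(E)\ne 0$. Hence $M$ is not an $\hat\calA$-multiplicative fibre in degree $k$, which establishes the contrapositive.

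The genuine input is thus entirely \pref{Proposition}{prop:bundle} (and behind it \pref{Lemma}{lem:construction}); everything else is the observation that the $\hat\calA$-genus detects only rational oriented cobordism, together with the bookkeeping of the dimension and parity assumptions. The closest thing to an obstacle is making sure the hypotheses line up — in particular that $\hat\calA$ carries no information outside the congruence $d+k\equiv 0\ (4)$, and that the standing assumptions needed for \pref{Proposition}{prop:bundle} are in force when we apply it.
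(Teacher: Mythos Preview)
Your argument is correct and is exactly the route the paper takes: the corollary is stated immediately after the sentence \enquote{From \pref{Proposition}{prop:bundle} and \pref{Remark}{rem:a-hat} (ii) we deduce the following corollary}, and you have simply unpacked those two inputs into the two implications. One small quibble: in the \enquote{only if} direction your phrase \enquote{otherwise there is nothing to prove} is not quite right---if $d+k\not\equiv 0\ (4)$ every $M$ is trivially an $\hat\calA$-multiplicative fibre, so the biconditional as literally stated would fail for such $k$; this is an omission in the corollary's hypotheses (the congruence $d+k\equiv 0\ (4)$ is implicit from \pref{Proposition}{prop:bundle}) rather than a defect in your reasoning.
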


\subsection{$\Spin$-structures and positive (scalar) curvature}
Let $M$ be $\Spin$ and let $B\!\diffs(M)$ be the classifying space for $M$-bundles with a $\Spin$-structure on the vertical tangent bundle\footnote{A model for $B\!\diffs(M)$ is given by 
\begin{align*}
	B\!\diffs(M) &\coloneqq\{(N,\hat \ell_n), M\cong N\subset\bbR^{\infty}, \hat\ell_N\in\mathrm{Bun}(TN, \theta^*U_d)\}
\end{align*}
for $\theta\colon B\!\Spin(d)\to BSO(d)$ the $2$-connected cover, $U_d\to BSO(d)$ the universal oriented vector bundle and $\mathrm{Bun}(\_,\_)$ the space of bundle maps.}. By \cite[Lemma 3.3.6]{ebert_thesis} the homotopy fibre of the forgetful map $B\!\diffs(M)\to B\!\diff(M)$ is a $K(\bbZ/2,1)$ if $M$ is simply connected. Therefore the induced map 
\[\pi_n(B\!\diffs(M))\otimes\bbQ\too\pi_n(B\!\diff(M))\otimes\bbQ\]
is an isomorphism and we may assume without loss of generality that the bundles from \pref{Section}{sec:main} carry a $\Spin$-structure on the vertical tangent bundle and hence on the total space, provided that $M$ admits one. 

It is a well known consequence of the Atiyah--Singer Index theorem and the Lichnerowicz formula that $\Spin$-manifolds with non-vanishing $\hat\calA$-genus do not admit a metric of positive scalar curvature \cite{atiyahsinger, lichnerowicz}. \pref{Theorem}{thm:main} then follows from another standard argument that goes back to Hitchin \cite{hitchin_spinors} (see \cite[Remark 1.5]{HankeSchickSteimle} or \cite[Proposition 3.7]{a-hat-bundles}) from \pref{Proposition}{prop:bundle}. Together with \cite[Theorem A]{actionofmcg} (see also \cite[Corollary E]{ownthesis}) we also derive the following result.

\begin{cor}
	Let $M$ be a simply connected $\Spin$-manifold of dimension at least $6$ that admits a metric of positive scalar curvature. Then the action 
	\[\pi_0(\diff(M)) \too\pi_0\haut(\calR_\psc(M))\]
	factors through a finite group if and only if $d\not\equiv3(4)$ or $d\equiv3(4)$ and all Pontryagin classes of $M$ vanish. 
\end{cor}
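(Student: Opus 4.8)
The plan is to handle the two directions of the equivalence separately --- the content-heavy one will be a citation, the other an application of \pref{Proposition}{prop:bundle} --- after first translating the arithmetic hypothesis on $M$ into the assertion that every oriented $M$-bundle over $S^1$ has vanishing $\hat\calA$-genus. If $d\not\equiv3\pmod 4$ then $d+1\not\equiv0\pmod 4$, so every closed $(d+1)$-manifold has vanishing $\hat\calA$-genus and there is nothing to prove. If $d\equiv3\pmod 4$, then $d\ge6$ forces $d\ge7=\max(3\cdot1+1,2\cdot1+5)$, so \pref{Proposition}{prop:bundle} and \pref{Remark}{rem:a-hat} (ii) apply at $k=1$ and show that $M$ is an $\hat\calA$-multiplicative fibre in degree $1$ if and only if all its rational Pontryagin classes vanish. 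Hence the hypothesis of the corollary is exactly that $M$ is an $\hat\calA$-multiplicative fibre in degree $1$.

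For the implication ``$\Leftarrow$'' I would then invoke \cite[Theorem A]{actionofmcg} (see also \cite[Corollary E]{ownthesis}): if $M$ is an $\hat\calA$-multiplicative fibre in degree $1$, the action $\pi_0(\diff(M))\to\pi_0\haut(\calR_\psc(M))$ factors through a finite group. This is the single input I would not reprove; all that remains for this direction is the observation, made above, that its hypothesis holds.

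For ``$\Rightarrow$'' I would argue by contraposition. Suppose $d\equiv3\pmod 4$ (so $d\ge7$) and that some rational Pontryagin class of $M$ is non-zero. Since $d+1\equiv0\pmod 4$, \pref{Proposition}{prop:bundle} with $k=1$ produces a smooth oriented $M$-bundle $E\to S^1$ with $\hat\calA(E)\ne0$, which carries a $\Spin$-structure because $M$ does; being a bundle over $S^1$ it is the mapping torus $T_\phi$ of some $\phi\in\diff(M)$. Pulling $E$ back along the degree-$n$ self-map of $S^1$ realises $T_{\phi^n}$ as an $n$-sheeted covering of $E$, so $\hat\calA(T_{\phi^n})=n\cdot\hat\calA(E)\ne0$ for every $n\ge1$. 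Now fix $g\in\calR_\psc(M)$: if the self-map $(\phi^n)^*$ of $\calR_\psc(M)$ were homotopic to the identity for some $n\ge1$, then $(\phi^n)^*g$ would be isotopic to $g$ through metrics of positive scalar curvature, and the standard argument going back to Hitchin \cite{hitchin_spinors} (see \cite[Remark 1.5]{HankeSchickSteimle} or \cite[Proposition 3.7]{a-hat-bundles}) would produce a metric of positive scalar curvature on the $\Spin$-manifold $T_{\phi^n}$, contradicting $\hat\calA(T_{\phi^n})\ne0$ via the Lichnerowicz formula and the Atiyah--Singer index theorem \cite{lichnerowicz,atiyahsinger}. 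Hence $(\phi^n)^*\not\simeq\id$ for all $n\ge1$, so the class of $\phi^*$ has infinite order in $\pi_0\haut(\calR_\psc(M))$; in particular the image of $\pi_0(\diff(M))$ there is infinite, so the action does not factor through a finite group.

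I expect essentially all of the genuine difficulty to sit in the ``$\Leftarrow$'' direction, which is \cite[Theorem A]{actionofmcg} and which I would not reprove here. On our side the only points requiring care are the bookkeeping in the translation step --- the bound $d\ge6$ is used precisely to force $d\ge7$ in the case $d\equiv3\pmod 4$, so that \pref{Proposition}{prop:bundle} and \pref{Remark}{rem:a-hat} (ii) are available at $k=1$ --- and the elementary remark that a bundle over $S^1$ is a mapping torus, which is what allows a single non-trivial bundle to produce, via its $n$-fold covers, an element of infinite order in the image of the action.
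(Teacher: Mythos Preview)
Your proposal is correct and matches the paper's approach: the paper does not spell out a proof but simply records that the corollary follows from \pref{Proposition}{prop:bundle} (with \pref{Remark}{rem:a-hat}(ii)) together with \cite[Theorem A]{actionofmcg}, and your write-up is precisely a fleshed-out version of that indication, splitting the equivalence as you do and running the Hitchin mapping-torus argument for the non-factoring direction. The only cosmetic point is that ``carries a $\Spin$-structure because $M$ does'' leans on $M$ being simply connected (equivalently, on the paper's observation that the homotopy fibre of $B\!\diffs(M)\to B\!\diff(M)$ is a $K(\bbZ/2,1)$, so the map on $\pi_1$ is surjective); you might make that explicit.
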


\noindent\pref{Theorem}{thm:main} also recovers \cite[Theorem 1.1 a)]{HankeSchickSteimle}:

\begin{cor}
	Let $k\ge1$ and let $N$ be a $\Spin$-manifold with $\dim(N) = d \ge \max(3k+1,2k+5)$ and $d+k\equiv0\;(4)$. Then $\pi_{k-1}(\calR_\psc(N))$ contains an element of infinite order.
\end{cor}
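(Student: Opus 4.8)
The plan is to obtain this as a direct specialisation of \pref{Theorem}{thm:main}. I would apply that theorem with $M\coloneqq N$, with the $\diff(N)$-invariant subset taken to be $\calR_C(N)\coloneqq\calR_\psc(N)$ itself, and with $g$ chosen to be any metric in $\calR_\psc(N)$; so the only hypothesis I would make explicit is that $\calR_\psc(N)\neq\emptyset$, i.e.\ that $N$ carries a metric of positive scalar curvature. Before invoking the theorem I would observe that its numerical condition $k\le\min\big(\tfrac{d-1}{3},\tfrac{d-5}{2}\big)$ is equivalent to the two inequalities $3k\le d-1$ and $2k\le d-5$, that is to $d\ge\max(3k+1,2k+5)$, which is exactly the dimension bound in the statement; the conditions $k\ge1$ and $d+k\equiv0\;(4)$ are assumed verbatim, and the remaining hypotheses of \pref{Theorem}{thm:main} — that $N$ be closed, simply connected, $\Spin$, and have at least one non-vanishing rational Pontryagin class — are part of the standing assumptions. \pref{Theorem}{thm:main} then gives that the orbit map $\pi_{k-1}(\diff(N,D))\otimes\bbQ\to\pi_{k-1}(\calR_\psc(N))\otimes\bbQ$ is nontrivial, so in particular $\pi_{k-1}(\calR_\psc(N))\otimes\bbQ\neq0$ and $\pi_{k-1}(\calR_\psc(N))$ contains an element of infinite order.

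If instead one wants an argument that does not merely quote \pref{Theorem}{thm:main}, I would reproduce its proof in the $\calR_\psc$ case. By \pref{Proposition}{prop:bundle} there is a fibre homotopy trivial $N$-bundle $E\to S^k$ with a $\Spin$-structure on the vertical tangent bundle and $\hat\calA(E)\neq0$; since $N$ is $\Spin$ and admits positive scalar curvature, $j=\min\{i\ge1\colon p_i(TN)\neq0\}<d/4$ by \pref{Remark}{rem:section}, so \pref{Lemma}{lem:section} furnishes a cross-section of $E$ with trivial normal bundle. Fibre homotopy triviality together with this framed section lets one present $E$ by clutching a loop $\gamma\in\pi_{k-1}(\diff(N,D))$, and the orbit map $f\mapsto f^*g_0$ applied to a fixed $g_0\in\calR_\psc(N)$ produces a class $\widetilde\gamma\in\pi_{k-1}(\calR_\psc(N))$. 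Were $\widetilde\gamma$ of finite order, a suitable multiple of the corresponding family of metrics would bound, yielding a fibrewise positive scalar curvature metric on $E$; by the Lichnerowicz formula applied fibrewise and the family index theorem this would force $\hat\calA(E)=0$, contradicting \pref{Proposition}{prop:bundle}. Hence $\widetilde\gamma$ has infinite order — this is the Hitchin-type argument alluded to in \pref{Section}{sec:main}.

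I do not expect a genuine obstacle here, since all the substantive work sits in \pref{Proposition}{prop:bundle} and in the assembly of \pref{Theorem}{thm:main}. The single step that deserves care is the passage from a fibre homotopy trivial $N$-bundle over $S^k$ equipped with a framed section to an honest element of $\pi_{k-1}(\diff(N,D))$; this is precisely where the triviality of the section's normal bundle — equivalently the hypothesis $j<d/4$, which is automatic for $\Spin$ manifolds of positive scalar curvature by \pref{Remark}{rem:section} — enters, and it is supplied by the comparison maps between diffeomorphisms and block diffeomorphisms recalled in \pref{Section}{sec:prelim}.
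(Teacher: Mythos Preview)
Your proposal has a genuine gap: you assume that $N$ itself satisfies the hypotheses of \pref{Theorem}{thm:main}, but the corollary imposes no such conditions. The statement only asks that $N$ be $\Spin$ of the indicated dimension; it does \emph{not} assume that $N$ is simply connected, nor that $N$ has a non-vanishing rational Pontryagin class. For instance $N=S^d$ (or any rational homology sphere, or a non-simply-connected $\Spin$ manifold) is allowed, and for such $N$ \pref{Theorem}{thm:main} simply does not apply. Your sentence ``the remaining hypotheses \dots\ are part of the standing assumptions'' is therefore incorrect, and both of your proposed arguments break down at this point: \pref{Proposition}{prop:bundle} and \pref{Lemma}{lem:section} require a non-vanishing Pontryagin class on the fibre, and the whole surgery-theoretic set-up in \pref{Section}{sec:prelim} uses simple connectivity.

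The paper circumvents this by working not with $N$ but with an auxiliary manifold that \emph{does} satisfy the hypotheses of \pref{Theorem}{thm:main}: one takes $K\times S^{d-4}$ for $K$ a $K3$-surface, which is simply connected, $\Spin$, has $p_1\neq0$, and admits positive scalar curvature. The bundle $E\to S^k$ with $\hat\calA(E)\neq0$ produced by \pref{Proposition}{prop:bundle} for this fibre comes with a framed cross-section (by \pref{Lemma}{lem:section}), along which one glues in the trivial $(N\setminus D^d)$-bundle to obtain an $\bigl(N\#(K\times S^{d-4})\bigr)$-bundle over $S^k$ still with non-vanishing $\hat\calA$-genus. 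This yields an infinite-order element in $\pi_{k-1}\bigl(\calR_\psc(N\#(K\times S^{d-4}))\bigr)$, and the passage back to $N$ uses that $N$ and $N\#(K\times S^{d-4})$ are $\Spin$-cobordant over $B\pi_1(N)$, so their spaces of psc metrics are homotopy equivalent. None of these three steps --- the $K3$ trick, the fibrewise connected sum along the framed section, and the cobordism invariance of $\calR_\psc$ --- appears in your proposal, and each is essential for the generality claimed.
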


\begin{proof}
Let $K$ be a $K3$-surface. Then for $n\coloneqq d-4\ge2$, the manifold $K\times S^n$ satisfies the hypothesis of \pref{Theorem}{thm:main} and there is a $K\times S^n$-bundle $E\to S^k$ that has non-vanishing $\hat\calA$-genus and admits a cross section with trivial normal bundle. If $N$ is an arbitrary $\Spin$-manifold of dimension $d\ge6$, then gluing in the trivial $N\setminus D^d$-bundle along this cross section yields a $N\#(K\times S^{d-4})$-bundle over $S^k$ with non-vanishing $\hat\calA$-genus. Hence the group $\pi_{k-1}(\calR_\psc(N\#(K\times S^{d-4})))$ contains an element of infinite order. Since $N$ is cobordant to $N\#(K\times S^{d-4})$ in $\Omega_{\Spin}^d(B\pi_1(N))$, the corresponding spaces of positive scalar curvature metrics are homotopy equivalent.
\end{proof}

\begin{rem}
	A more general result without any dimension restriction has been proven by Botvinnik--Ebert--Randal-Williams \cite{berw}. The methods from loc.cit. are however not constructive and do not give a way to decide if the obtained elements arise from the orbit of the action $\diff(M)\actson\calR_\psc(M)$. 
\end{rem}

\appendix

\bigskip
\printbibliography
\bigskip

\end{document}